\documentclass [12pt]{article}
\addtolength{\hoffset}{-0.5cm}
\addtolength{\textwidth}{1cm}
\usepackage{amsmath}
\usepackage{amsthm}
\usepackage{graphicx}
\usepackage{amsfonts, amssymb}
\usepackage[utf8]{inputenc} 
\usepackage[english]{babel}

\usepackage[justification=centering]{caption}
\usepackage{pgf,tikz}
\usetikzlibrary{arrows}

\newtheorem{thm}{Theorem}
\newtheorem{lemma}[thm]{Lemma}
\newtheorem{prop}[thm]{Proposition}
\newtheorem{cor}[thm]{Corollary}
\newtheorem{remark}[thm]{Remark}

\def\R{{\mathbb R}}
\def\E{{\mathbb E\,}}

\def\P{{\mathbb P}}
\def\z{{\mathbb Z}}

\def\n{{\mathbb N}}

\def\N{{\mathbb N}}

\def\AA{{\mathcal A}}
\def\BB{{\mathcal B}}
\def\FF{{\mathcal F}}
\def\GG{{\mathcal G}}

\def\w{{\mathcal W}}
\def\M{{\mathcal M}}
\def\ww{{\widetilde{\w}}}
\def\pp{{\widetilde{p}}}

\def\dd{{\widetilde{\Delta}}}
\def\aa{{\widetilde{A}}}
\def\bb{{\widetilde{B}}}
\def\www{{\widetilde{w}}}
\def\q{{\widetilde{q}}}
\def\L{{\widetilde{L}}}

\begin{document}
\title{Existence of a persistent hub in the convex preferential attachment model}
\author
{Pavel Galashin}
\maketitle
\abstract{ 
A vertex of a randomly growing graph is called a persistent hub if at all but finitely many moments of time it has the maximal degree in the graph. We establish the existence of a persistent hub in the Barabási--Albert random graph model with probability one. We also extend this result to the class of convex preferential attachment graphs, where a vertex of degree $k$ gets a new edge with probability proportional to some convex function of $k$.
}

\section{Introduction}
\label{sect:intro}

The preferential attachment model was introduced by R. Albert and A. L. Barabási in \cite{BA} in order to create a natural model for a dynamically growing random network with a scale-free power-law distribution of degrees of vertices. This distribution appears in many large real random graphs such as internet, social networks, etc.

Since then the model became very popular and has been investigated mathematically and empirically in many works, for example \cite{BRST,KR,M1,N,GGL}. Many generalizations have been suggested: \cite{ACL,DM,M2} etc.

The Barabási--Albert (BA) preferential attachment model is defined as follows:
\begin{enumerate}
  \item Before the first step we have a natural number $m_0$ and a tree, which contains one vertex $v_1$ and zero edges.
  \item On the $k$-th step ($k \ge 1$) we attach one new vertex $v_k$ and $m_0$ new edges to the graph. These edges are attached one-by-one. Every edge connects $v_k$ to some old vertex $v_i$ of the graph. This vertex $v_i$ is chosen randomly, with probability proportional to its degree $\deg(v_i)$. The degrees are being refreshed after every edge attachment.
\end{enumerate}

The convex preferential attachment model involves the same algorithm, but the old vertex $v_i$ is chosen with probability proportional to $\w(\deg(v_i))$, where $\w:\z_+\to\R_+$ is a fixed positive, convex and unbounded function defined on the set of non-negative integers. Formal definitions of these models are given below.

\subsection{The main result}
Investigation of vertices of maximal degree became one of the most popular research directions in preferential attachment, because the presence of vertices with large degrees is one of the features of preferential attachment model as opposed to the classical Erdős–Rényi model. While the graph grows, different vertices can have maximal degrees at different steps.

If a vertex has the maximal degree on some step, then it is called \textit{a hub}. If some vertex is a hub for all but finitely many steps, then it is called \textit{a persistent hub}. This notion has appeared in the literature, see, for example, \cite{DM}.

Now we can ask the following question: does the hub change infinitely many times, or does there appear a persistent hub after some number of steps?

The following theorem answers this question:

\begin{thm}\label{thm:main}
In the BA and convex preferential attachment models with probability $1$ there exist numbers $n$ and $k$ such that on any step after the $n$-th step the vertex $v_k$ has the highest degree among all vertices. In other words, the persistent hub appears with probability one.
\end{thm}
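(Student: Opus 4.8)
The plan is to pass to the continuous-time (exponential-clock) embedding and to reduce the theorem to the statement that, almost surely, the identity of the maximal-degree vertex changes only finitely often. To each vertex $i$ I would attach an independent family of clocks $E_{i,k}\sim\mathrm{Exp}(\w(k))$, $k\ge 0$, where $E_{i,k}$ is the waiting time for $i$ to pass from degree $k$ to degree $k+1$, and run all clocks simultaneously. The induced jump chain reproduces the convex preferential attachment dynamics, since when the degrees are $(d_j)$ the next edge lands on vertex $i$ with probability $\w(d_i)/\sum_j\w(d_j)$. The decisive feature of this embedding is that the relative order of the jumps of any two vertices $i,j$ is measurable with respect to the pair $(E_{i,\cdot},E_{j,\cdot})$ alone, so the pairwise ``race'' between $i$ and $j$ is unaffected by the remaining vertices. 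Since a persistent hub exists exactly when the leader changes finitely often, it suffices to prove that there are almost surely finitely many lead changes.

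First I would analyze two vertices in isolation. In the embedding, vertex $i$ reaches degree $d$ at time $S_i(d)=\sum_{k}E_{i,k}$ summed from its initial degree up to $d-1$, so whether a lower vertex $B$ (degree $b$) ever ties a higher vertex $A$ (degree $a>b$) amounts to whether the mean-zero process $\sum_{k=a}^{c-1}(E_{B,k}-E_{A,k})$, indexed by $c\ge a$, ever drops below the handicap $-\sum_{k=b}^{a-1}E_{B,k}$. Convexity enters precisely here: a convex unbounded $\w:\z_+\to\R_+$ grows at least linearly, so $\sum_k\w(k)^{-2}<\infty$, whence the fluctuation process has finite variance and converges almost surely, while a large degree gap forces the handicap to be large. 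A maximal inequality then bounds the probability $q(a,b)$ that $B$ ever catches $A$ and shows $q(a,b)\to0$ as the gap grows; in particular $D_i(t)-D_j(t)\to\pm\infty$, so for each fixed pair the sign of the degree difference stabilizes and every pair has an almost sure eventual winner.

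The remaining and principal difficulty is to prevent the lead from being captured by ever-later vertices. I would fix a reference vertex and show that the maximal degree grows without bound, so that when $v_j$ arrives the incumbent leader already has degree at least some $D_j\to\infty$. Feeding this growing handicap into the two-vertex estimate gives a bound on $\P(v_j\text{ ever becomes the leader})$, and the crux of the argument is to make this bound summable in $j$. Borel--Cantelli would then guarantee that only finitely many vertices ever lead, i.e. the leader eventually lies in a finite (random) set; pairwise stabilization inside that finite set produces a single eventual winner, the persistent hub.

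The main obstacle is exactly this summability. For strongly superlinear $\w$ the handicap induces near-explosion and the estimates are comfortable, but in the Barab\'asi--Albert linear case the per-vertex catch-up probabilities decay slowly, and one must exploit the P\'olya-urn concentration of the two-vertex race --- the almost sure convergence of the degree proportion to a limit different from $1/2$ --- together with a sufficiently sharp control on the growth of the maximal degree, in order to bring the sum over $j$ under control. Handling ties, when two vertices share the maximal degree, and making the uniqueness of the eventual winner rigorous are routine once this summability has been established.
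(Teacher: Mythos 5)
There is a genuine gap, and it sits exactly where you locate it: the summability over later vertices. Your exponential embedding correctly reduces the race between two vertices to the sign of $L=\sum_{k\ge a}(E_{A,k}-E_{B,k})-\sum_{k=b}^{a-1}E_{B,k}$, and the $L^2$ maximal inequality you invoke bounds the catch-up probability by roughly $\bigl(\sum_{k\ge a}\w(k)^{-2}\bigr)/H^2$, which for a convex (hence at least linear) weight and a gap $a$ is of order $1/a$ up to logarithms. That is far too weak: in the BA case the maximal degree grows only like $n^{1/2}$ (and in the paper's general convex bound like $n^{1/(4+2\beta_0)}$), so the resulting series $\sum_n \M_n^{-1}$ diverges and Borel--Cantelli does not apply. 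What is actually needed, and what the paper proves (Proposition \ref{thm:qAbound} and Corollary \ref{qAbound}), is that the probability that a degree-one newcomer ever ties a vertex of degree $A$ is \emph{exponentially} small, $q(A)\le P(A)/2^A$; this is obtained not from a second-moment bound but from an exact reflection-principle count of admissible lattice paths combined with a convexity argument showing that the most likely path is the one hugging the diagonal, followed by a comparison with the linear weight $n+\beta_0$. Your remark that one "must exploit the Pólya-urn concentration" points at the right phenomenon (the $\mathrm{Beta}(1+\beta,A+\beta)$ limit indeed has mass only about $2^{-A}$ above $1/2$), but you do not carry out that estimate, and for a general convex $\w$ the urn structure is not directly available, so some comparison device is needed and is not supplied.

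A second, smaller omission: you say you would "show that the maximal degree grows without bound", but qualitatively unbounded growth is not enough --- a quantitative polynomial lower bound on $\M_n$ is required to feed into the exponential catch-up estimate, and for general convex $\w$ the degree of any \emph{fixed} vertex can remain bounded (e.g. $\w(n)=2^{2^n}$), so one must control the running maximum itself; the paper does this with a supermartingale for $C_n/\M_n$. On the positive side, your embedding gives a clean and arguably simpler proof of the \emph{pairwise} stabilization step (finitely many ties for a fixed pair of vertices), since $L$ is a nondegenerate sum of independent continuous variables and hence almost surely nonzero; the paper instead reaches this via a coupling with the linear model and the absolute continuity of the Beta limit. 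But without an exponential (or at least much faster than $1/A$) bound on the catch-up probability, the central Borel--Cantelli step of your argument does not close.
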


\subsection{Previous research}
S.~Dereich and P.~M{\"o}rters consider in \cite{DM} a model very similar to the BA preferential attachment model with $m_0=1$: one starts with a single vertex and a fixed concave function $f:\z_+\to\R_+$, such that $f(n)\leq n+1$ for all $n\in\z_+$. On the $k$-th step:
\begin{itemize}
  \item one new vertex $v_k$ is added
  \item for every old vertex $v_i$ the edge $e_{ik}$ connecting $v_i$ to $v_k$ is added with probability
      $$\frac{f(\deg(v_i))}{k}.$$
\end{itemize}
The main difference is that here for $i\neq j$ the decisions of adding the edges $e_{ik}$ and $e_{jk}$ are made \textbf{independently}, while in the BA model exactly one (if $m_0=1$) old vertex becomes connected to the new vertex.

The question of existence of a persistent hub for this model has been answered completely in \cite{DM}, but the difference between the models makes it too hard to apply those results to the BA case.

Theorem 1.7  from \cite{DM} states that a persistent hub appears if and only if
$$\sum_{k=1}^\infty\frac1{f(k)^2}<\infty.$$
This condition is of course satisfied if $f$ is convex and unbounded. But, even though the models are different, it remains an interesting open question if the same result also holds in our situation after we drop the convexity assumption. A weaker conjecture is that a persistent hub in the BA model appears if the weight function is superlinear.

\subsection{Outline of the paper}

The proof of Theorem \ref{thm:main} assumes $m_0=1$. We extend it to the case of an arbitrary $m_0$ in the last subsection.

\begin{enumerate}
\item In Subsection \ref{subsect:specs} we give precise specifications of all considered models.

  \item In Section \ref{sect:2dim} we investigate the joint behavior of two fixed vertices. The main result of this section is Proposition \ref{thm:qAbound}, which states that if on the $n$-th step there is some vertex $v_k$ with a high degree, then with high probability the degree of $v_{n+1}$ will be lower than the degree of $v_k$ on every step.

  \item In Section \ref{sect:main} we prove the main result:
  \begin{enumerate}
  \item In Subsection \ref{subsect:lemmas} we prepare some tools to make a comparison between the convex model and the BA model.
    \item
    In Subsection \ref{subsect:tooFast}  we show that the maximal degree tends to infinity fast enough, even though the degree of any fixed vertex may be bounded.
      \item In Subsection \ref{subsect:finiteCandidates} we use Borel--Cantelli lemma to show that with probability $1$ all but finite number of vertices never have the highest degree.

  \item For every pair of the remaining vertices we prove in Subsection \ref{subsect:finitePair} that there is only a finite number of steps on which their degrees coincide. This completes the proof for the case $m_0=1$.
  \item In Subsection \ref{subsect:m0} we modify the steps of the proof so that they work for an arbitrary $m_0$.
\end{enumerate}
\end{enumerate}
Throughout the paper every weight function is assumed to be convex.
\subsection{Definitions of the models}
\label{subsect:specs}
First we define all models for $m_0=1$, because the major part of the proof is concentrated around this case.
\subsubsection{Basic model for $m_0=1$}
Let $X_n^k$ be the degree of the vertex $v_k$ before the $n$-th step.
Note that before the $n$-th step there are $n$ vertices and $n-1$ edges in the tree, and the total degree of all vertices has a very simple form:

\[
   \sum_{k=1}^{n} X_n^k= 2(n-1).
\]

By $p_n^k$ denote the probability that the new edge on the $n$-th step is attached to the vertex $v_k,\ k \le n$. Then
\[
   p_n^k := \begin{cases}
   1,& k=n=1, \\
   \frac{X_n^k}{2(n-1)}, & n>1, 1\le k \le n.
   \end{cases}
\]

Since in this paper we are not interested in the topological structure of the tree, we can just consider the Markov chain of vectors
$X_n:=(X_n^k)_{1\le k\le n}$.

\subsubsection{Generalized model for $m_0=1$}

\label{definition}
Let $\w:\z_+ \to \R_+$ be a strictly positive function.

In this model, a vertex $v_k$ of degree $X_n^k$ has weight $\w(X_n^k)$, and the probability $p_n^k$ that the new edge is attached to the vertex $v_k$ on the $n$-th step  is defined as a ratio of the weight of $v_k$ to the total weight of all vertices:
\[
   p_n^k := \begin{cases}
   1,& k=n=1, \\
   \frac{\w(X_n^k)}{w_n}, & n>1, 1\le k \le n.
   \end{cases}
\]
where
\[
	w_n:=\sum_{k=1}^{n} \w(X_n^k).
\]
(here, unlike the basic model, $w_n$ is a random variable)

This model is also common, for example, in \cite{DM}, \cite{OS} the cases of superlinear ($\w(n)\gg n$) and sublinear ($\w(n)\ll n$) preferential attachment are considered, in \cite{RTV} the asymptotical degree distribution for a wide range of weight functions is given, and in \cite{RT} the Hausdorff dimension of some natural measure on the leaves of the limiting tree is evaluated.

\subsubsection{Convex model for $m_0=1$}
The convex model is a special case of the generalized model. Here $\w(n)$ must be convex and unbounded. Note that $\w(n)$ is not assumed to be increasing.

The convex model includes several popular special cases. We have already discussed the basic model. In \cite{OS} the case $\w(n)=n^p,\ p>1$, is considered. In \cite{M2} and \cite{M1} the case $\w(n)=n+\beta,\ \beta>-1$ is considered. We call this case {\it the linear model}. Some similar models have been considered earlier, for example, in \cite{DMconcave} the concave preferential attachment rule is investigated.

The convexity condition is pretty mild, but, on the other hand, it is very convenient and simplifies proofs and calculations.

\subsubsection{Generalized model for an arbitrary $m_0$}
\begin{itemize}
\item We start with no vertices and no edges.
\item Let $n\geq 0$ be the number of the current step. Put $N:=\left \lfloor \frac{n}{m_0} \right \rfloor$.
\item If $n\equiv 0\mod m_0$, then add one new vertex $v_N$.
\item Connect the vertex $v_N$ to exactly one of the vertices $v_0,\dots,v_{N-1}$, if this set is not empty.
\item A vertex $v_i,\ 0\leq i\leq N-1$ is chosen with probability
$$\frac{\w(\deg(v_i))}{\sum_{j=0}^{N-1} \w(\deg(v_j))}.$$
\end{itemize}
The basic, linear and convex models are just special cases of the generalized model, so it is enough to specify the generalized model for an arbitrary $m_0$.

\section{Pairwise vertex degree analysis}
\label{sect:2dim}
In this section we investigate a random walk on the two-dimensional integer lattice. In terms of preferential attachment, we consider two fixed vertices, and we are interested only in steps on which the degree of one of these vertices increases. We obtain a random walk by putting a point on $\n^2$, whose coordinates are equal to the degrees of these two vertices.

Consider the following random walk $R_k$ on $\N^2$. From the point $(A,B)$ it moves either to the point $(A+1,B)$ with probability $\tfrac{\w(A)}{\w(A)+\w(B)}$ or to the point $(A,B+1)$ with probability $\tfrac{\w(B)}{\w(A)+\w(B)}$. Note that the sum of the coordinates of $R_k$ increases by $1$ on every step.

\subsection{The number of paths}
\label{path_number}
We are interested in the probability that $R_k$ moves from some fixed point to the diagonal $\{(m,m)\}_{m \in \n}$. It means that the degrees of the two considered vertices become equal.

The event \{$R_k$ crosses the diagonal\} can be partitioned into events \{$R_k$ moves to the point $(m,m)$, and this is the first time it crosses the diagonal\}$_{m \in \N}$. We evaluate the probabilities of these events. To do it, we first need to count all admissible paths connecting the initial point and the point $(m,m)$, where by {\it admissible} we mean that only the endpoints of this path may belong to the diagonal.

\begin{lemma}
 Let $m\ge A>B$ be some natural numbers. By $\GG(A,B,A',B')$ denote the number of admissible paths connecting $(A,B)$ to $(A',B')$.

 Then
$$
\GG(A,B,m,m) = \frac{(2m-1-A-B)! (A-B)}{(m-A)!(m-B)!}\ .
$$

\end{lemma}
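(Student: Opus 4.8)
The plan is to recognize this as a ballot-type counting problem and to solve it with the reflection principle. First I would record the elementary bookkeeping: a step of $R_k$ from $(x,y)$ goes either right, to $(x+1,y)$, or up, to $(x,y+1)$, so any path from $(A,B)$ to $(m,m)$ consists of exactly $m-A$ right steps and $m-B$ up steps, a total of $2m-A-B$ steps. Admissibility forces the path to avoid the diagonal $\{x=y\}$ at every intermediate point. Since $A>B$, the starting point lies strictly below the diagonal (that is, $x>y$), and the only lattice neighbor of $(m,m)$ that is still strictly below the diagonal is $(m,m-1)$. Hence the last step of any admissible path is forced to be an up step from $(m,m-1)$, which reduces the problem to counting paths from $(A,B)$ to $(m,m-1)$ that stay strictly below the diagonal throughout, and then appending this forced final step.

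Next I would linearize the geometry by tracking the gap $d:=x-y$. A right step sends $d\mapsto d+1$ and an up step sends $d\mapsto d-1$, so along the reduced path $d$ starts at $A-B\ge 1$ and ends at $(m)-(m-1)=1$, while the condition "strictly below the diagonal'' becomes exactly $d\ge 1$, i.e.\ $d$ never reaches $0$. The reduced path uses $m-A$ right steps and $m-1-B$ up steps, for a total of $N:=2m-1-A-B$ steps, which accounts for the factorial $N!$ appearing in the claimed formula.

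Now I would count these constrained $\pm 1$ walks by André's reflection method. The number of unconstrained walks with the required step composition is $\binom{N}{m-A}$, and the number of "bad'' walks that touch $0$ is, by reflecting the initial segment up to its first visit to $0$, in bijection with walks that start from the reflected height $-(A-B)$ and end at height $1$; since such a walk necessarily crosses $0$, the bijection is clean, and a short count of the step composition shows there are $\binom{N}{m-B}$ of them. Subtracting gives the number of admissible paths as $\binom{N}{m-A}-\binom{N}{m-B}$, and a routine simplification based on $(m-A)!=(m-A)(m-1-A)!$ and $(m-B)!=(m-B)(m-1-B)!$ collapses this difference to $\frac{(2m-1-A-B)!\,(A-B)}{(m-A)!\,(m-B)!}$, which is exactly $\GG(A,B,m,m)$.

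The only genuinely delicate point is the reflection step: I must make sure the reflected endpoint and the step composition are computed correctly so that the two binomial coefficients line up as above. I should also verify that the hypothesis $m\ge A>B$ guarantees all factorial arguments are nonnegative, so that the binomial identity and the final formula are valid; everything after the subtraction is mechanical algebra.
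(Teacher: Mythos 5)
Your proposal is correct and follows essentially the same route as the paper: force the last step to come from $(m,m-1)$, apply Andr\'e's reflection principle to count the paths that avoid the diagonal, and simplify the resulting difference $\binom{2m-1-A-B}{m-A}-\binom{2m-1-A-B}{m-B}$ to the stated closed form. The only cosmetic difference is that you phrase the walk in terms of the gap $d=x-y$ and reflect the initial segment up to the first visit to $0$, whereas the paper reflects the tail after the first diagonal crossing; both yield the identical count.
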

\begin{proof}
  By $\AA(A,B,A',B')$ denote the number of different up-right paths connecting the point $(A,B)$ with the point $(A',B')$. Then

\[
     \AA(A,B,A',B')={{A'+B'-A-B} \choose {A'-A}}\ ,
\]
because this is the number of ways to choose on which of $A'+B'-A-B$ steps the path goes up, and on the remaining steps the path goes to the right.

By
$$
\BB(A,B,A',B'):=\AA(A,B,A',B')-\GG(A,B,A',B')
$$
 denote the number of non-admissible paths between these two points.

To evaluate $\GG(A,B,m,m)$ we use André's reflection principle. Let us show that there is a one-to-one correspondence between all paths from $(A,B)$ to $(m-1,m)$ and all non-admissible paths from $(A,B)$ to $(m,m-1)$. Consider an arbitrary path between $(A,B)$ and $(m-1,m)$. It crosses the diagonal, because $A>B$ but $m-1 <m$. Now we perform the following operation: all steps before the intersection with the diagonal remain the same while all steps after the intersection are inverted (right $\leftrightarrow$ up). The part of the path after the intersection connected the point $(k,k)$ and the point $(m,m-1)$ for some $k$. Therefore, after the inversion it connects the point $(k,k)$ and the point $(m-1,m)$. Hence, now we have a non-admissible path from $(A,B)$ to $(m,m-1)$. This process can be reversed, because the first intersection point with the diagonal remains the same, hence the required bijection is constructed.

We get a formula
\[
   \BB(A,B,m,m-1)=\AA(A,B,m-1,m).
\]

Since all admissible paths from $(A,B)$ to $(m,m)$ must have an inner point $(m,m-1)$, we get the following chain of equalities, which concludes the proof:

\begin{eqnarray*}
  \GG(A,B,m,m) &=& \GG(A,B,m,m-1)
  \\
  &=& \AA(A,B,m,m-1)- \BB(A,B,m,m-1)
  \\
   &=& \AA(A,B,m,m-1)- \AA(A,B,m-1,m)
  \\
  &=& {{2m-1-A-B} \choose {m-A}} -  {{2m-1-A-B}\choose {m-A-1}}
  \\
  &=& \frac{(2m-1-A-B)!}{(m-A)!(m-B-1)!}
      - \frac{(2m-1-A-B)!}{(m-A-1)!(m-B)!}
  \\
  &=& \frac{(2m-1-A-B)!}{(m-A-1)!(m-B-1)!}
      \left(\frac{1}{m-A}- \frac{1}{m-B} \right)
  \\
  &=& \frac{(2m-1-A-B)! (A-B)}{(m-A)!(m-B)!}\ .
\end{eqnarray*}
\end{proof}

\subsection{The upper bound for the diagonal intersection probability}
By $q(A,m)$ denote the probability that $R_k$ moves from the point $(A,1)$ to the point $(m,m)$ following an admissible path.

\begin{prop}
  \label{thm:qAbound}
There exists a polynomial (with coefficients depending only on the weight function $\w$) $P(\cdot)$ such that for sufficiently large  $A$ and for any $m \ge A$ it is true that
\[
q(A,m) < \frac {P(A)}{2^A m^{3/2}}
\ .\]
\end{prop}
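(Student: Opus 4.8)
The plan is to bound $q(A,m)$ by (the number of admissible paths) $\times$ (the maximal probability of a single admissible path), and then estimate both factors; the count comes from the Lemma and the single-path bound comes from convexity.

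First I would fix an admissible path from $(A,1)$ to $(m,m)$ and write its probability as one fraction. A rightward step out of $(a,b)$ contributes $\w(a)$ to the numerator and $\w(a)+\w(b)$ to the denominator, an upward step contributes $\w(b)$ and $\w(a)+\w(b)$. Since the first coordinate runs through $A,\dots,m-1$ and the second through $1,\dots,m-1$ irrespective of the order of the steps, the numerator is the path-independent quantity $N:=\prod_{a=A}^{m-1}\w(a)\cdot\prod_{b=1}^{m-1}\w(b)$, while the denominator is $\prod_{s=A+1}^{2m-1}\bigl(\w(a_s)+\w(b_s)\bigr)$, where $(a_s,b_s)$ is the unique point of the path on the antidiagonal $a+b=s$. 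Here convexity enters: for fixed $s$ the map $a\mapsto\w(a)+\w(s-a)$ is convex and symmetric about $s/2$, hence minimized at the balanced point, giving the path-independent bound $\w(a_s)+\w(b_s)\ge\mu(s):=\w(\lfloor s/2\rfloor)+\w(\lceil s/2\rceil)$. Thus every admissible path has probability at most $N/\prod_s\mu(s)$, and by the Lemma with $B=1$,
\[
q(A,m)\le \GG(A,1,m,m)\cdot\frac{N}{\prod_{s=A+1}^{2m-1}\mu(s)}=\frac{(2m-2-A)!\,(A-1)}{(m-A)!\,(m-1)!}\cdot\frac{N}{\prod_{s}\mu(s)}.
\]

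Next I would evaluate the denominator product using $\mu(2j)=2\w(j)$ and $\mu(2j+1)=\w(j)+\w(j+1)$. The even antidiagonals pull out $2^{\,m-A/2-1}\prod_{j=A/2+1}^{m-1}\w(j)$ (taking $A$ even; the odd case differs only by index shifts), which cancels the bulk of $N$, leaving
\[
\frac{N}{\prod_s\mu(s)}=\rho(A)\cdot 2^{-(m-A/2-1)}\cdot\prod_{j=A/2}^{m-1}\frac{\w(j)}{\w(j)+\w(j+1)},\qquad \rho(A):=\frac{\prod_{b=1}^{A/2-1}\w(b)}{\prod_{j=A/2+1}^{A-1}\w(j)}.
\]
Since a convex unbounded $\w$ is eventually increasing, for $A$ large each factor of the last product is at most $\tfrac12$, so that product is at most $2^{-(m-A/2)}$ and $N/\prod_s\mu(s)\le\rho(A)\,2^{-(2m-A-1)}$. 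Applying Stirling to the binomial factor gives $\GG(A,1,m,m)\le C(A-1)\,4^{m}2^{-A}m^{-3/2}$; the $4^m$ cancels $2^{-(2m-A-1)}$, and collecting the powers of two leaves
\[
q(A,m)\le \frac{C'(A-1)\,\rho(A)}{m^{3/2}}.
\]
Hence the whole statement reduces to showing $\rho(A)\le\widetilde P(A)\,2^{-A}$ for a polynomial $\widetilde P$; equivalently that $\rho(A)^{-1}=\prod_{i=1}^{A/2-1}\w(A/2+i)/\w(i)$ is at least $2^{A}/\text{poly}(A)$.

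The main obstacle is precisely this last estimate, because the naive per-term bound loses a constant factor $A/2$ times and is far too weak. I would instead factor out the combinatorial core: writing $g(n)=\w(n)/n$,
\[
\rho(A)^{-1}=\binom{A-1}{A/2-1}\cdot\prod_{i=1}^{A/2-1}\frac{g(A/2+i)}{g(i)},
\]
where the central binomial already contributes $\ge c\,2^{A}/\sqrt{A}$. The delicate point is to bound the product of $g$-ratios below by $1/\text{poly}(A)$, and this is where convexity is essential. Convexity makes $g$ unimodal (the quantity $\w(n+1)-\w(n)$ is non-decreasing, so $g$ first decreases then increases), and in the near-linear regime, where $g$ decreases to a finite limit $L=\lim\w(n)/n$, convexity forces $\w(i)-Li$ to be non-increasing and hence $g(i)-L\le(\w(1)-L)/i$; therefore $\sum_i\log\bigl(g(i)/g(A/2+i)\bigr)$ is only $O(\log A)$ rather than $O(A)$, which is exactly what keeps $\rho(A)^{-1}$ within a polynomial factor of $2^{A}$. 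In the superlinear case the $g$-ratios exceed $1$ past a fixed index and the bound is easier. Assembling these pieces — with the few small indices, the parity of $A$, and the explicit Stirling constants all absorbed into $\text{poly}(A)$ — yields a polynomial $P$ depending only on $\w$ with $q(A,m)<P(A)/(2^{A}m^{3/2})$.
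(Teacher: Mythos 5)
Your proposal is correct in its overall architecture and that architecture coincides with the paper's: bound $q(A,m)$ by the number of admissible paths times the maximal probability of a single path, get the count from the Lemma plus Stirling, and use convexity to observe that each denominator factor $\w(a_s)+\w(b_s)$ on a fixed antidiagonal $a+b=s$ is minimized at the most balanced point (this is exactly the paper's inner proposition that the path $S^*$, vertical segment followed by a staircase, has maximal probability). Where you genuinely diverge is in estimating that maximal path probability. The paper splits $S^*$ into $S_1$ (the climb from $(A,1)$ to $(A,A)$) and $S_2$ (the staircase), bounds $\P(S_2)\le 2^{2A}/2^{2m}$ trivially, and bounds $\P(S_1)\le P_2(A)/2^{2A}$ by replacing $\w$ on $[1,A]$ with its chord $\ww_A(n)=k_An+b_A$ (each fraction only increases), then pushing $\beta(A)=b_A/k_A$ up to the fixed $\beta_0$ and evaluating the resulting Gamma-function ratio by Stirling. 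You instead telescope the full product over antidiagonals, which correctly reduces everything to the single inequality $\rho(A)\le \widetilde P(A)\,2^{-A}$. That inequality is true for every positive convex unbounded $\w$, so your route does close; but your argument for it, via unimodality of $g(n)=\w(n)/n$, is the thinnest part of the write-up: in the regime where $g$ decreases to a finite limit $L$ you use the upper bound $g(i)-L\le(\w(1)-L)/i$ but never justify the matching lower bound $g(A/2+i)\ge L$ (it does hold, because a decreasing sequence stays above its limit, and one must also check $L>0$), and the eventually-increasing case needs the finitely many initial indices where $g$ still decreases to be absorbed into the polynomial. A cleaner finish, in the spirit of the paper's chord trick, is to write $\w(i)\le\frac{A/2}{A/2+i-1}\w(1)+\frac{i-1}{A/2+i-1}\w(A/2+i)$ by convexity and use $\w(n)\ge\delta n-c$ (valid for convex unbounded $\w$) to get $\w(A/2+i)/\w(i)\ge(A/2+i-1)/(i-1+c')$ with $c'$ depending only on $\w$; the product of these is $\ge 2^A/\mathrm{poly}(A)$ by Stirling. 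With that repair your proof is complete; what your version buys is a single unified product estimate with no case split into $S_1$ and $S_2$, at the cost of a slightly lossier per-antidiagonal bound (you use the unconstrained balanced minimum even on antidiagonals $s\le 2A$ that no actual path can balance) and a more delicate final asymptotic.
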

\begin{proof}
We evaluate upper bounds for the number of paths $\GG(A,1,m,m)$ and for the probability of every fixed path from $(A,1)$ to $(m,m)$ separately.

\begin{lemma}
  There exists a polynomial  $P_1(\cdot)$ such that
\[
\GG(A,1,m,m) \leq \frac{P_1(A)\ 2^{2m}}{2^Am^{3/2}}\ \ \ \forall\ A,m\ge A
\ .\]
\end{lemma}
\begin{proof}
\begin{eqnarray*}
&&
 \GG(A,1,m,m) = \frac{(2m-2-A)! (A-1)}{(m-A)!(m-1)!}\\
&=& \frac{(2m-2)!}{(m-1)!(m-1)!} \cdot \frac{A-1}{2m-1-A}
\cdot \frac{(m-A+1)\cdot \ldots \cdot (m-1)}
{(2m-A)\cdot \ldots \cdot (2m-2)}\, .
\end{eqnarray*}
In the last expression, the first fraction is a binomial coefficient. Note that the numerator and denominator of the last fraction have the same number of factors ($A-1$), and every factor of the numerator is at most the half of the corresponding factor of the denominator. Therefore
\begin{eqnarray*}
  \GG(A,1,m,m) \leq \frac{2^{2m}}{\sqrt{m}} \cdot \frac{P_1(A)}{m}\cdot \frac{1}{2^A}
\end{eqnarray*}
(all appearing constants are already included in the polynomial). The lemma is proved.
\end{proof}

\begin{lemma}
  There exist a polynomial $P_2(\cdot)$  and a number $A_1$ such that if $m \ge A>A_1$ then for every path $S$ from $(A,1)$ to $(m,m)$ it is true that
\[
p(S) \leq \frac{P_2(A)}{2^{2m}}
\ .\]

\end{lemma}
\begin{proof}
  Consider a composite path consisting of two simple paths:
$$S^*=S_1,S_2$$
where
$$ S_1=(A,1),(A,2),\ldots,(A,A),$$
$$S_2=(A,A),(A+1,A),(A+1,A+1),(A+2,A+1),(A+2,A+2) \ldots,(m,m).$$
\begin{prop}
  Among all paths with the same endpoints $S^*$ has the largest probability.
\end{prop}
\begin{proof}
  The probabilities of any two paths with the same endpoints are two fractions with same numerators but with different denominators. Therefore it is sufficient to find the path with a minimal denominator. Every denominator is a product of several expressions of the form  $\w(A_k)+\w(B_k)$ where $A_k+B_k$ is fixed. Due to the convexity of $\w$, the smaller $|A_k-B_k|$ is the smaller $\w(A_k)+\w(B_k)$ is. Clearly, the path $S^*$ minimizes $|A_k-B_k|$ on each step.
\end{proof}

By $\P(S)$ denote the probability of the path $S$.

Obviously, we have an upper bound for $\P(S_2)$:
$$
\P(S_2) \le \frac{1}{2^{2(m-A)}}=\frac{2^{2A}}{2^{2m}}.
$$

Now to conclude the lemma proof it suffices to show that
$$
\P(S_1) \le \frac{P_2(A)}{2^{2A}}.
$$
for some polynomial $P_2(A)$ and sufficiently large $A$.

The explicit formula for $\P(S_1)$ looks as follows:
\[
\P(S_1)=\frac{\w(1)}{\w(1)+\w(A)}\frac{\w(2)}{\w(2)+\w(A)}\ldots\frac{\w(A-1)}{\w(A-1)+\w(A)}
\ .\]

We introduce some notations. By $\ww_A(\cdot)$ denote a function that interpolates $\w$ at the points $1$ and $A$ in a linear way.

$\ww_A(n)$ has a form  $\ww_A(n)=k_An+b_A$ for some real numbers $k_A$ and $b_A$. Let $\beta(A):=b_A/k_A\in[-\infty,+\infty]$.

Choose $A_0\in\n$ such that $\w(A_0)>\w(1)$, and put $\beta_0:=\beta(A_0)$. Then for any $A>A_0$ it is obvious that
\begin{equation}
\label{eq:beta}
  -1<\beta(A)<\beta_0\ .
\end{equation}

\begin{remark}
\label{remark:beta}
The function $\beta(A)+1$ is not necessarily separated from zero, unlike the linear model. On the contrary, if $A=o(\w(A))$ then $\lim_{A\to\infty}\beta(A)=-1$.
\end{remark}

Every fraction here will increase if we replace $\w(k)$ by $\ww_A(k)$, because all fractions are less than $1$, and we add the non-negative number $\ww_A(k)-\w(k)$ to both numerator and denominator. Therefore,

\[
\P(S_1)\le
\frac{\ww_A(1)}{\ww_A(1)+\ww_A(A)}\frac{\ww_A(2)}{\ww_A(2)+\ww_A(A)}\ldots\frac{\ww_A(A-1)}{\ww_A(A-1)+\ww_A(A)}
\ .\]

We know that $\ww_A(n)=k_An+b_A$. After substituting it and reducing all fractions by $k_A$ we get
\[
\P(S_1)\le
\frac{1+\beta(A)}{1+A+2\beta(A)}\frac{2+\beta(A)}{2+A+2\beta(A)}
\ldots\frac{A-1+\beta(A)}{2A-1+2\beta(A)}
\ .
\]

Now we want to replace $\beta(A)$ by a larger number $\beta_0$. Note that if $\beta(A)<\beta_0$, then for any $B,C,D,E \in \n$ such that $D+E\beta(A)>0$ the following holds:
\begin{eqnarray}
\label{convex:monotone}
\frac{B+C\beta(A)}{D+E\beta(A)}<\frac{B+C\beta_0}{D+E\beta_0} \Leftrightarrow BE-CD<0 .
\end{eqnarray}

The condition on the right-hand side is satisfied, thus after replacing $\beta(A)$ by $\beta_0$ we get the following inequality:

\begin{eqnarray*}
\P(S_1)&\le&\frac{(1+\beta_0)(2+\beta_0)
\ldots (A+\beta_0-1)}{(A+1+2\beta_0) \ldots (2A-1+2\beta_0)}\\
&=&\frac{\Gamma(A+\beta_0)\Gamma(A+2\beta_0+1)}{\Gamma(\beta_0+1)\Gamma(2A+2\beta_0)}
\ .
\end{eqnarray*}

By Stirling's formula for any  $z\geq 1$ it is true that
 $\Gamma(z+1) \asymp \sqrt{z} (\frac{z}{e})^z $.
 After applying this and hiding all the constants into the polynomial we get
\begin{eqnarray*}
\P(S_1) &\leq& \frac{P_4(A) e^{2A+2\beta_0}}{e^{A+2\beta_0}e^{A+\beta_0}}
\frac{(A+\beta_0-1)^{A+\beta_0-1}(A+2\beta_0)^{A+2\beta_0}}{(2A+2\beta_0-1)^{2A+2\beta_0-1}} \\
&\leq& P_3(A) \cdot \left(\frac{A+\beta_0-1}{2A+2\beta_0-1}\right)^{A+\beta_0-1} \cdot \left(\frac{A+2\beta_0}{2A+2\beta_0-1}\right)^{A+2\beta_0}\\
&=&P_3(A)\cdot\frac{1}{2^{2A+3\beta_0-1}}\cdot \left(\frac{A+\beta_0-1}{A+\beta_0-1+1/2}\right)^{A+\beta_0-1} \cdot \left(\frac{A+2\beta_0}{A+2\beta_0-(1/2+\beta_0)}\right)^{A+2\beta_0}\\
&\leq& P_2(A) \cdot \frac{1}{2^{2A}}
\ .
\end{eqnarray*}
The last inequality is not as obvious as the other ones. Note that
\[
\left(\frac{x}{x+a}\right)^x=\left(1-\frac{a}{x+a}\right)^x\le \exp(-ax/(a+x))
\]
and that for large $x$ and bounded $a$ this expression is also bounded by some constant, which has also been already included into the polynomial.
\end{proof}

The conclusion of Proposition \ref{thm:qAbound} follows from our lemmas by multiplication of the corresponding inequalities.
\end{proof}

\begin{cor}
\label{qAbound}
By $q(A)$ denote the probability that our random walk moves from the point $(A,1)$ to the diagonal. Then for sufficiently large values of $A$ and for some polynomial $P(\cdot)$ it is true that
$$q(A) < \frac{P(A)}{2^A}\ .$$
\end{cor}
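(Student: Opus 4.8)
The plan is to recover $q(A)$ by summing the ``first crossing'' probabilities $q(A,m)$ over all admissible landing points on the diagonal. Recall from the discussion at the beginning of Subsection~\ref{path_number} that the event $\{R_k$ crosses the diagonal$\}$ decomposes into the disjoint events $\{R_k$ moves to $(m,m)$, and this is the first time it crosses the diagonal$\}$, whose probabilities, for a walk started at $(A,1)$, are exactly the quantities $q(A,m)$. Since the first coordinate of the walk never decreases and starts at $A$, the diagonal can only be reached at points $(m,m)$ with $m\ge A$. Hence
\begin{equation*}
q(A)=\sum_{m=A}^{\infty} q(A,m).
\end{equation*}

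Next I would apply Proposition~\ref{thm:qAbound} termwise. For all sufficiently large $A$ it yields $q(A,m)<\tfrac{P(A)}{2^A m^{3/2}}$ for every $m\ge A$, and summing over $m$ gives
\begin{equation*}
q(A)<\frac{P(A)}{2^A}\sum_{m=A}^{\infty}\frac{1}{m^{3/2}}.
\end{equation*}
The final step is simply to note that the series $\sum_{m\ge 1} m^{-3/2}$ converges---this is precisely the place where the exponent $3/2>1$ furnished by the Proposition is essential---so its tail is bounded by the absolute constant $\zeta(3/2)$. Absorbing this constant into the polynomial, we obtain $q(A)<\widetilde P(A)/2^A$ for a new polynomial $\widetilde P$, which is the asserted bound.

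I do not expect a genuine obstacle here: all the analytic work is contained in Proposition~\ref{thm:qAbound}, and this corollary is essentially a bookkeeping step (decompose the crossing event, apply the pointwise bound, sum a convergent $p$-series). The only points requiring minor care are identifying the correct range of summation $m\ge A$ and verifying convergence of the resulting series; both are immediate. I would remark in passing that one can sharpen the tail estimate to $\sum_{m\ge A} m^{-3/2}=O(A^{-1/2})$ by comparison with an integral, giving a slightly better polynomial, but since the statement only requires a bound of the form $P(A)/2^A$ this refinement is unnecessary.
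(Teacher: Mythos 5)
Your proposal is correct and follows essentially the same route as the paper: decompose the crossing event by the first diagonal point $(m,m)$ with $m\ge A$, apply Proposition~\ref{thm:qAbound} termwise, and absorb the convergent sum $\sum_{m\ge A} m^{-3/2}$ into the polynomial. The only (immaterial) difference is that the paper writes the decomposition as an inequality $q(A)\le\sum_{m=A}^{\infty}q(A,m)$ rather than an equality.
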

\begin{proof}
By Proposition \ref{thm:qAbound} we get that
\[
	  q(A)\le \sum_{m=A}^\infty q(A,m)
 	  \le \frac{P(A)}{2^A} \sum_{m=A}^\infty \frac1{m^{3/2}}\ .
 \]
  It remains to note that the series $\sum\frac1{m^{3/2}}$ is convergent.
\end{proof}

\subsection{Limit distribution of the random walk in the linear case}
Suppose $\w(n)=n+\beta$, $\beta>-1$. In this case, the following proposition provides an explicit  asymptotic form of the random walk distribution.:

\begin{prop}
\label{betadistr}
If $R_k$ starts at the point $(A,1)$ then the quantity $A_k/(A_k+B_k)$ tends to some random variable $H(A)$ as $k$ tends to infinity. Moreover, $H(A)$ has a beta probability distribution:

$$H(A) \sim Beta(1+\beta,A+\beta)\ .$$
\end{prop}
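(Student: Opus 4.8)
The plan is to recognise this walk as a Pólya--Eggenberger urn and to identify the limit through its moments. Writing $A_k,B_k$ for the coordinates of $R_k$, note first that the total $A_k+B_k=A+1+k$ is deterministic, and that from $(A_k,B_k)$ the walk increments the first coordinate with probability $\frac{A_k+\beta}{A_k+B_k+2\beta}$ and the second with the complementary probability. This is exactly a two-colour Pólya urn with (possibly non-integer) initial weights $A+\beta$ and $1+\beta$ and reinforcement $+1$. Set $S_k:=A_k+B_k+2\beta=A+1+2\beta+k$ and $M_k:=\frac{A_k+\beta}{S_k}$. The quantity $\frac{A_k}{A_k+B_k}$ differs from $M_k$ by $\frac{\beta(A_k-B_k)}{(A_k+B_k)S_k}=O(1/k)$, so the two have the same limit and it suffices to study $M_k$.

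First I would show that $M_k$ is a martingale for the natural filtration $\FF_k$. Since $\E[A_{k+1}+\beta\mid\FF_k]=(A_k+\beta)+M_k$ and $S_{k+1}=S_k+1$, while $A_k+\beta=M_kS_k$, a one-line computation gives $\E[M_{k+1}\mid\FF_k]=M_k$. As $0\le M_k\le1$, the martingale convergence theorem yields $M_k\to H(A)$ almost surely for some $[0,1]$-valued random variable $H(A)=\lim_k\frac{A_k}{A_k+B_k}$.

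It remains to identify the law of $H(A)$, and the key device is a family of rising-factorial martingales. For $j\ge1$ put
\[
N_k^{(j)}:=\frac{(A_k+\beta)(A_k+\beta+1)\cdots(A_k+\beta+j-1)}{S_k(S_k+1)\cdots(S_k+j-1)}.
\]
Using $(a+1)(a+2)\cdots(a+j)=\tfrac{a+j}{a}\,a(a+1)\cdots(a+j-1)$ for the numerator increment, a short telescoping calculation shows $\E[N_{k+1}^{(j)}\mid\FF_k]=N_k^{(j)}$, so each $N_k^{(j)}$ is a bounded martingale and $\E N_k^{(j)}=N_0^{(j)}$ for all $k$. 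Since $S_k\to\infty$, every factor $\frac{A_k+\beta+i}{S_k+i}$ tends to $H(A)$ (even on $\{H(A)=0\}$, where the numerator stays bounded while $S_k\to\infty$), hence $N_k^{(j)}\to H(A)^j$; boundedness then gives
\[
\E\,[H(A)^j]=N_0^{(j)}=\frac{\Gamma(A+\beta+j)\,\Gamma(A+1+2\beta)}{\Gamma(A+\beta)\,\Gamma(A+1+2\beta+j)}.
\]
These are precisely the moments of a beta distribution, whose parameters are read off as $A+\beta$ and $1+\beta$; as any law supported on $[0,1]$ is determined by its moments, this pins down the distribution of $H(A)$ and completes the proof.

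The main obstacle is this last step: the martingale convergence theorem alone gives existence of $H(A)$ but says nothing about its law, and the rising-factorial martingales $N_k^{(j)}$ are exactly what convert the exchangeable urn structure into closed-form moments. Verifying their martingale property, together with the harmless behaviour of the factors on $\{H(A)=0\}$, is the one computation that must be done carefully; the martingale property of $M_k$, the $O(1/k)$ comparison of $\frac{A_k}{A_k+B_k}$ with $M_k$, and moment determinacy on $[0,1]$ are all routine. An essentially equivalent alternative is to observe that the sequence of increment-directions is exchangeable and invoke de Finetti's theorem, reading off the same moments as $\P(\text{the first }j\text{ steps all increase the first coordinate})$.
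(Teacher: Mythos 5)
Your proposal is correct and rests on the same key identification as the paper: the walk is a P\'olya--Eggenberger urn with initial weights $A+\beta$ and $1+\beta$. The difference is that the paper stops there and cites the classical urn limit theorem, whereas you prove that theorem from scratch via the martingale $M_k=(A_k+\beta)/S_k$ and the rising-factorial martingales $N_k^{(j)}$; I checked the martingale identities and the limit $N_k^{(j)}\to H(A)^j$ (including your treatment of the event $\{H(A)=0\}$), and they are fine, as is the $O(1/k)$ comparison of $A_k/(A_k+B_k)$ with $M_k$. What your route buys is self-containedness and an explicit verification that the result holds for non-integer weights $\beta>-1$, which the bare citation of the integer-ball urn model leaves implicit; the de Finetti alternative you mention is the other standard proof and would work equally well. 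One small point worth flagging: your moments $\E H(A)^j=\frac{\Gamma(A+\beta+j)\,\Gamma(A+1+2\beta)}{\Gamma(A+\beta)\,\Gamma(A+1+2\beta+j)}$ identify the limit of $A_k/(A_k+B_k)$ as $Beta(A+\beta,\,1+\beta)$ in the usual convention (first parameter attached to $x^{a-1}$, mean $\frac{A+\beta}{A+1+2\beta}$), so the parameter order in the proposition as stated appears to be reversed; this is immaterial for the paper, which only uses absolute continuity of the limit law, but your ordering is the standard one.
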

\begin{proof}
As noted in \cite{B}, this random walk is a special case of Pólya urn model with initial parameters $(1+\beta,A+\beta)$. Recall that for the urn model the limit distribution of that fraction is well known, see, for example, \cite{M} or \cite{JK}.
\end{proof}

\section{The proof of the main result}

\label{sect:main}
\subsection{Comparison between the convex model and the linear model}
\label{subsect:lemmas}
Motivated by (\ref{eq:beta}), for the convex model with weight function $\w$ we introduce the linear model with $\ww(n)=n+\beta_0$ and call it {\it the linear comparison model}.

\begin{lemma}[The comparison lemma]

\label{convex:compare}
Suppose in the convex model before the $n$-th step the vertex $v_t$ has the maximal degree $m$, and the degrees of all other vertices are fixed. Let $m>A_0$. By $p$ denote the probability that the next edge is attached to the vertex $v_t$. Now consider exactly the same situation (all the degrees remain the same), but in the linear comparison model. By $\pp$ denote the probability that the next edge is attached to the vertex $v_t$ in the linear case.

Then $p \ge \pp$.
\end{lemma}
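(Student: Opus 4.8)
The plan is to reduce the inequality $p \ge \pp$ to a term-by-term comparison of the two weight functions and then to extract that comparison from convexity. Write the fixed degrees of the vertices other than $v_t$ as $d_1,\dots,d_s$, so that $d_i \le m$ for all $i$, and set $S := \sum_i \w(d_i)$ and $\widetilde S := \sum_i \ww(d_i)$ with $\ww(n) = n+\beta_0$. Then $p = \w(m)/(\w(m)+S)$ and $\pp = \ww(m)/(\ww(m)+\widetilde S)$. Cross-multiplying and cancelling the common term $\w(m)\ww(m)$, the inequality $p\ge\pp$ is seen to be equivalent to $\w(m)\,\widetilde S \ge \ww(m)\,S$, that is
\[
\sum_i \frac{\w(d_i)}{\w(m)} \le \sum_i \frac{\ww(d_i)}{\ww(m)}.
\]
Hence it suffices to prove, term by term, that for every integer $d$ with $1\le d\le m$ one has $\dfrac{\w(d)}{\w(m)} \le \dfrac{\ww(d)}{\ww(m)}$.

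First I would replace the line $\ww(n)=n+\beta_0$ by the secant $\ww_{A_0}(n) = k_{A_0}(n+\beta_0)$, which agrees with $\w$ at $n=1$ and $n=A_0$ by the definition of $\beta_0 = \beta(A_0)$. Since $\ww_{A_0} = k_{A_0}\,\ww$ with $k_{A_0} = (\w(A_0)-\w(1))/(A_0-1) > 0$ (here the choice $\w(A_0)>\w(1)$ is used), the ratios $\ww(d)/\ww(m)$ and $\ww_{A_0}(d)/\ww_{A_0}(m)$ coincide, so it is enough to show that $x\mapsto \w(x)/\ww_{A_0}(x)$ does not decrease as $x$ runs from $d$ to $m$. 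Note $\ww_{A_0}(x) = k_{A_0}(x+\beta_0) > 0$ for $x\ge 1$, since $\beta_0 > -1$ by (\ref{eq:beta}).

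I would then split on the position of $d$. If $d \le A_0$, convexity of $\w$ gives $\w(d) \le \ww_{A_0}(d)$ (the graph lies below its secant on $[1,A_0]$) while $\w(m) \ge \ww_{A_0}(m)$ for $m \ge A_0$ (the graph lies above the secant beyond $A_0$); hence $\w(d)/\ww_{A_0}(d) \le 1 \le \w(m)/\ww_{A_0}(m)$. If instead $A_0 \le d \le m$, I would establish monotonicity of the ratio on integers $\ge A_0$ through the discrete Wronskian
\[
N_d := \w(d+1)\,\ww_{A_0}(d) - \w(d)\,\ww_{A_0}(d+1).
\]
Because $\ww_{A_0}$ is affine, $\ww_{A_0}(d+1)+\ww_{A_0}(d-1) = 2\,\ww_{A_0}(d)$, and a short computation gives $N_d - N_{d-1} = \ww_{A_0}(d)\,\bigl(\w(d+1)-2\w(d)+\w(d-1)\bigr)\ge 0$ by convexity. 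Moreover $N_{A_0} = \ww_{A_0}(A_0)\,\bigl(\w(A_0+1)-\ww_{A_0}(A_0+1)\bigr)\ge 0$, using $\w(A_0)=\ww_{A_0}(A_0)$ together with $\w(A_0+1)\ge\ww_{A_0}(A_0+1)$. Therefore $N_d \ge 0$ for all $d\ge A_0$, which is exactly the monotonicity $\w(d)/\ww_{A_0}(d) \le \w(d+1)/\ww_{A_0}(d+1)$; chaining these inequalities yields $\w(d)/\ww_{A_0}(d)\le\w(m)/\ww_{A_0}(m)$, as desired.

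The reduction in the first paragraph is routine; the heart of the argument, and the only place convexity genuinely enters, is the monotonicity of $\w/\ww_{A_0}$ beyond $A_0$. The main point requiring care is that $\w$ lives on $\z_+$, so I phrase everything through second differences rather than derivatives; the sign bookkeeping in $N_d - N_{d-1}$ and the base case $N_{A_0}\ge 0$ are where convexity and the choice of $A_0$ (guaranteeing $k_{A_0}>0$ and positivity of the secant) are used.
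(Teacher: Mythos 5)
Your proof is correct, but it follows a genuinely different route from the paper's. The paper replaces $\w$ by the secant $\ww_m$ through the points $1$ and $m$ (the current maximal degree): since $\ww_m(m)=\w(m)$ while $\w(k)\le\ww_m(k)$ for $1\le k\le m$, this leaves the numerator alone and only increases the denominator, giving $p\ge (m+\beta(m))/(2(n-1)+n\beta(m))$; it then uses $\beta(m)<\beta_0$ together with the monotonicity criterion (\ref{convex:monotone}) to pass to $\pp$. You instead cross-multiply down to the term-by-term inequality $\w(d)/\w(m)\le\ww(d)/\ww(m)$ and prove that with the \emph{fixed} secant $\ww_{A_0}$, via the observation that $\w/\ww_{A_0}$ is at most $1$ on $[1,A_0]$ and nondecreasing beyond $A_0$ (your discrete Wronskian computation, whose base case and increments are both nonnegative exactly because of convexity and the choice $\w(A_0)>\w(1)$). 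Your argument is more self-contained --- it bypasses the intermediate linear model with parameter $\beta(m)$ and the criterion (\ref{convex:monotone}) entirely --- and the term-by-term form transfers verbatim to the two-vertex random walk of Section \ref{sect:2dim}, where the lemma is actually invoked; the paper's version is shorter because it recycles the secant-at-$m$ and $\beta(\cdot)$ machinery already built for Proposition \ref{thm:qAbound}. Both proofs ultimately use convexity in the same way: the graph of $\w$ lies below a secant inside the interpolation interval and above it outside.
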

\begin{proof}

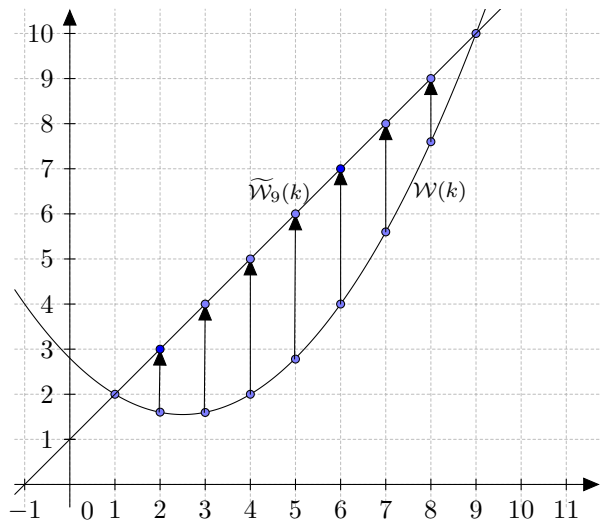
\begin{figure}
  \centering
\definecolor{qqqqff}{rgb}{0.0,0.0,1.0}
\definecolor{xdxdff}{rgb}{0.49019607843137253,0.49019607843137253,1.0}
\definecolor{cqcqcq}{rgb}{0.7529411764705882,0.7529411764705882,0.7529411764705882}
\begin{tikzpicture}[line cap=round,line join=round,>=triangle 45,x=0.6cm,y=0.6cm]
\draw [color=cqcqcq,dash pattern=on 1pt off 1pt, xstep=0.6cm,ystep=0.6cm] (-1.220000000000001,-0.5000000000000002) grid (11.740000000000004,10.540000000000004);
\draw[->,color=black] (-1.220000000000001,0) -- (11.740000000000004,0);
\foreach \x in {-1,1,2,3,4,5,6,7,8,9,10,11}
\draw[shift={(\x,0)},color=black] (0pt,2pt) -- (0pt,-2pt) node[below] {\footnotesize $\x$};
\draw[->,color=black] (0,-0.5000000000000002) -- (0,10.540000000000004);
\foreach \y in {,1,2,3,4,5,6,7,8,9,10}
\draw[shift={(0,\y)},color=black] (2pt,0pt) -- (-2pt,0pt) node[left] {\footnotesize $\y$};
\draw[color=black] (0pt,-10pt) node[right] {\footnotesize $0$};
\clip(-1.220000000000001,-0.5000000000000002) rectangle (11.740000000000004,10.540000000000004);
\draw[smooth,samples=100,domain=-1.220000000000001:11.740000000000004] plot(\x,{14/5+1/5*(\x)^(2)-(\x)});

\begin{scriptsize}
\draw[color=black] (8.2,6.5) node {$\w(k)$};
\draw[color=black] (4.65,6.5) node {$\ww_9(k)$};
\draw [fill=xdxdff] (1,2) circle (1.5pt);
\draw [fill=xdxdff] (9,10) circle (1.5pt);
\draw [fill=xdxdff] (8,7.600000000000001) circle (1.5pt);
\draw [fill=xdxdff] (7,5.600000000000001) circle (1.5pt);
\draw [fill=xdxdff] (6,4) circle (1.5pt);
\draw [fill=xdxdff] (5,2.7800800000000008) circle (1.5pt);
\draw [fill=xdxdff] (4,2) circle (1.5pt);
\draw [fill=xdxdff] (3,1.5960800000000002) circle (1.5pt);
\draw [fill=xdxdff] (2,1.6040799999999997) circle (1.5pt);

\end{scriptsize}

\draw [domain=-1.220000000000001:11.740000000000004] plot(\x,{(-8-8*\x)/-8});

\draw [->] (1.98,1.6040799999999997) -- (2,3);
\draw [->] (2.98,1.5960800000000002) -- (3,4);
\draw [->] (4,2) -- (4,5);
\draw [->] (4.98,2.7800800000000008) -- (5,6);
\draw [->] (6,4) -- (6,7);
\draw [->] (7,5.600000000000001) -- (7,8);
\draw [->] (8,7.600000000000001) -- (8,9);
\begin{scriptsize}
\draw [fill=qqqqff] (2,3) circle (1.5pt);
\draw [fill=xdxdff] (3,4) circle (1.5pt);
\draw [fill=xdxdff] (4,5) circle (1.5pt);
\draw [fill=xdxdff] (5,6) circle (1.5pt);
\draw [fill=qqqqff] (6,7) circle (1.5pt);
\draw [fill=xdxdff] (7,8) circle (1.5pt);
\draw [fill=xdxdff] (8,9) circle (1.5pt);
\end{scriptsize}
\end{tikzpicture}
\caption{Replacing the weight function increases the relative weight of all vertices except for the current hub. \label{fig:convex}}

\end{figure}

Note that for all $1\le k \le m$, $\w(k) \le \ww_m(k)$ due to the convexity of $\w$. Then
\begin{eqnarray*}
p&=&\frac{\w(m)}{\sum_v(\w(\deg(v)))} \ge \frac{\ww_m(m)}{\sum_v(\ww_m(\deg(v)))} \\
&=&\frac{m+\beta(m)}{2(n-1)+n\beta(m)} \ge \frac{m+\beta_0}{2(n-1)+n\beta_0}=\pp \ .
\end{eqnarray*}

First we increase the denominator (see Figure \ref{fig:convex}), then we reduce the fraction, and then we use (\ref{convex:monotone}).

\end{proof}

\begin{remark}
Unlike the expressions on the left hand side, $\pp$ depends only on $m$ and on the total degree of vertices.
\end{remark}

\subsection{The maximal degree grows fast enough}
\label{subsect:tooFast}
In the linear and basic models the degree of any fixed vertex grows fast enough to provide the convergence of the series $\sum q(A)$ with probability $1$, see Remark \ref{rem:nomax} below. Unfortunately, this is not always the case in the convex model, for example, if $\w(n)=2^{2^n}$ then with positive probability the degree of the first vertex will be bounded, because the second one will be connected to almost all vertices. So, any fixed vertex degree can be bounded. However, the maximal degree (the degree of random vertex), as we will see, grows fast enough with probability $1$.

By $\M_n$ denote the maximal degree before the $n$-th step.

\begin{prop} \label{thm::fastmax}
There exists a sequence $C_n$ of positive real numbers satisfying the following conditions:
\begin{enumerate}
  \item $C_n$ grows fast enough: the expression $C_n n^{-1/(4+2\beta_0)}$ converges to a positive finite limit,
  \item $C_n/\M_n$ is a supermartingale with respect to the filtration

  $\sigma_n=\sigma(\M_1, \dots, \M_n)$.
\end{enumerate}
\end{prop}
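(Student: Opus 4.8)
\emph{Proof plan.} The plan is to exploit the fact that $\M_n$ is non\-decreasing and changes by at most $1$ per step, so that the supermartingale requirement collapses to a single deterministic one\-step inequality. First I would record the increment structure: once $\M_n\ge 2$, the freshly created vertex $v_n$ (of degree $1$) cannot be a hub, so $\M_{n+1}=\M_n+\xi_n$, where $\xi_n\in\{0,1\}$ is the indicator that the edge added on step $n$ lands on a vertex of current maximal degree. Writing $m:=\M_n$ and $r_n:=\P(\xi_n=1\mid\sigma_n)$, a direct computation gives
\[
\E\!\left[\frac1{\M_{n+1}}\,\Big|\,\sigma_n\right]=\frac{1-r_n}{m}+\frac{r_n}{m+1}=\frac1m\Bigl(1-\frac{r_n}{m+1}\Bigr).
\]
Hence $C_n/\M_n$ is a supermartingale precisely when $C_{n+1}\bigl(1-\tfrac{r_n}{m+1}\bigr)\le C_n$, i.e. when $C_{n+1}/C_n\le\bigl(1-\tfrac{r_n}{m+1}\bigr)^{-1}$.

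The heart of the argument is a lower bound on $r_n$ that is measurable with respect to the coarse filtration $\sigma_n=\sigma(\M_1,\dots,\M_n)$, which remembers only the running maximum and not the full degree vector. This is exactly what the comparison lemma supplies: since there is at least one hub of degree $m$, and since in the $m_0=1$ model the total degree $2(n-1)$ is \emph{deterministic}, Lemma~\ref{convex:compare} yields, pointwise on every configuration with $\M_n=m>A_0$,
\[
\P(\xi_n=1\mid\text{full state})\ \ge\ \frac{m+\beta_0}{2(n-1)+n\beta_0}.
\]
The right\-hand side depends only on $m$ and $n$, hence is $\sigma_n$\-measurable, so conditioning on $\sigma_n$ preserves it and $r_n\ge\frac{m+\beta_0}{2(n-1)+n\beta_0}$. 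Combining the crude uniform estimate $\frac{m+\beta_0}{m+1}\ge\frac12$ (valid once $m$ exceeds a constant depending on $\beta_0$) with $2(n-1)+n\beta_0\le n(2+\beta_0)$ gives the clean bound $\frac{r_n}{m+1}\ge\frac1{n(4+2\beta_0)}$; the loss of the factor $2$ here is precisely what degrades the exponent from $\frac1{2+\beta_0}$ to the stated $\frac1{4+2\beta_0}$.

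With this in hand I would define $C_n$ by a product: fix $n_0$ large enough that the estimate above holds for $n\ge n_0$, set $C_{n_0}:=1$ and
\[
C_{n+1}:=C_n\Bigl(1-\tfrac{1}{n(4+2\beta_0)}\Bigr)^{-1},\qquad n\ge n_0.
\]
Since $1-\tfrac1{n(4+2\beta_0)}\ge 1-\tfrac{r_n}{m+1}>0$, taking reciprocals shows this ratio meets the required inequality, so $C_n/\M_n$ is a supermartingale for $n\ge n_0$. For the growth rate, write $\alpha=\frac1{4+2\beta_0}$ and use $-\log(1-\alpha/j)=\alpha/j+O(1/j^2)$ together with $\sum_{j<n}\frac1j=\log n+\gamma+o(1)$ to obtain $\log C_n=\alpha\log n+\text{const}+o(1)$, whence $C_n\,n^{-1/(4+2\beta_0)}$ converges to a positive finite limit, giving property~(1).

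The main obstacle is the mismatch between the true attachment probability, which is a function of the entire degree vector, and the coarse $\sigma_n$ on which we must condition; the comparison lemma resolves it only because the total degree is deterministic, turning its bound into a function of $(m,n)$ alone. A secondary technical point is the low\-degree regime $m\le A_0$, where neither the comparison lemma nor the $\tfrac12$ estimate applies: here one would start the supermartingale at the entrance time $\tau=\inf\{n:\M_n>A_0\}$, checking separately that $\M_n\to\infty$ almost surely so that $\tau<\infty$, after which the construction is shifted to begin at $\tau$.
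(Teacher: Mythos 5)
Your proposal is correct and follows essentially the same route as the paper: bound the probability that the maximal degree increments from below by $\frac{\M_n+\beta_0}{2(n-1)+n\beta_0}$ via the linear comparison, compute $\E[1/\M_{n+1}\mid\cdot]$, and define $C_n$ by the corresponding multiplicative recursion so that $C_n\asymp n^{1/(4+2\beta_0)}$. Your explicit treatment of the measurability issue (the lower bound depends only on $(\M_n,n)$, hence survives conditioning on the coarse filtration) and of the regime $\M_n\le A_0$ is slightly more careful than the paper's, but the argument is the same.
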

\begin{cor}
There exists a (random) constant $M$ such that for all $n\geq 2$,
\begin{equation}
 			\label{fastM}
 		 \M_n\ge M n^{1/(4+2\beta_0)}.
 \end{equation}

\end{cor}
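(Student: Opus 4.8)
The plan is to extract the bound directly from Proposition \ref{thm::fastmax} via the martingale convergence theorem. First I would observe that the process $Y_n := C_n/\M_n$ is nonnegative: each $C_n$ is positive by construction, and for $n \ge 2$ the tree contains at least one edge, so $\M_n \ge 1 > 0$. Hence, by part (2) of the proposition, $Y_n$ is a nonnegative supermartingale with respect to the filtration $\sigma_n$.

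Next I would invoke Doob's convergence theorem for nonnegative supermartingales: $Y_n$ converges almost surely to a finite limit $Y_\infty$. The key point is that I do not need to identify this limit — I only need that an almost surely convergent sequence is almost surely bounded. Consequently there is a random variable $K$, finite with probability one, such that $C_n/\M_n \le K$ for every $n$. Rearranging this gives $\M_n \ge C_n/K$ for all $n \ge 2$.

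It then remains to bound $C_n$ from below. By part (1) of Proposition \ref{thm::fastmax}, the sequence $C_n\, n^{-1/(4+2\beta_0)}$ converges to a positive finite limit, so it is bounded away from $0$ for large $n$; since all of its terms are strictly positive, after shrinking the bound to absorb the finitely many small indices there is a deterministic constant $c > 0$ with $C_n \ge c\, n^{1/(4+2\beta_0)}$ for all $n \ge 2$. Combining this with the previous inequality yields $\M_n \ge (c/K)\, n^{1/(4+2\beta_0)}$, so the claim holds with the random constant $M := c/K$.

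There is essentially no serious obstacle here, since the heavy lifting is carried out inside Proposition \ref{thm::fastmax}. The only point requiring care is that the supermartingale convergence theorem is applied in its nonnegative form, which guarantees a finite limit (and hence boundedness of the trajectory) without any extra integrability hypothesis, and that it is this boundedness — rather than the exact value of the limit, which may well be $0$ — that produces the uniform random constant $M$.
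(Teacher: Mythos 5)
Your proposal is correct and follows essentially the same route as the paper: apply Doob's convergence theorem to the nonnegative supermartingale $C_n/\M_n$, deduce almost sure boundedness by a random constant, and combine with the growth of $C_n$ from part (1) of Proposition \ref{thm::fastmax}. Your write-up is slightly more explicit about converting the convergence of $C_n\,n^{-1/(4+2\beta_0)}$ into a uniform deterministic lower bound on $C_n$, but the argument is the same.
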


\begin{proof}

$C_n/\M_n$ is a positive supermartingale, hence by Doob's theorem it tends to a finite limit with probability $1$, therefore this sequence with probability $1$ is bounded by some random variable $C$. But this implies $\M_n \ge C_n/C$ with probability $1$, i.e. with probability $1$ for all $n \ge 2$ we get (\ref{fastM}).
\end{proof}

\begin{proof}[Proof of Proposition \ref{thm::fastmax}]
By $V_n$ denote the set of vertices before the $n$-th step, and by $p_n$ denote the probability that the maximal degree increases on the $n$-th step. We can bound it from below:
\[
p_n \ge \frac{\w(\M_n)}{\sum_{v\in V_n} \w(\deg(v))}\ge \frac{\ww_{\M_n}(\M_n)}{\sum_{v\in V_n} \ww_{\M_n}(\deg(v))}  \geq \frac{\M_n+\beta_0}{\www_n}
=:\pp_n\ .\]
Here $\www_n=2(n-1)+n\beta_0$.

Denote $\alpha=4+2\beta_0$.

For the sequence $Y_n:=C_n/\M_n$ to be a supermartingale it is necessary to show that
\[
\E(Y_{n+1}|\FF_n) \le Y_n
\ .\]

Note that
\begin{eqnarray*}
Y_{n+1}/C_{n+1}
&=&
\begin{cases}
\frac1{\M_n+1}& \mbox{with probability } p_n,\\
\frac1{\M_n}& \mbox{with probability } 1-p_n .\\
\end{cases} \\
\end{eqnarray*}

It follows that
\begin{eqnarray*}
\E(Y_{n+1}/C_{n+1}|\FF_n)
&=&\frac{p_n}{\M_n+1}+\frac{1-p_n}{\M_n}\\
&=&\frac{p_n\M_n+\M_n+1-p_n\M_n-p_n}{\M_n(\M_n+1)}\\
&=&\frac{\M_n+1-p_n}{\M_n(\M_n+1)}
=\frac{1}{\M_n}-\frac{p_n}{\M_n(\M_n+1)}
\\
&\le&\frac{1}{\M_n}-\frac{\pp_n}{\M_n(\M_n+1)}
\le \frac{1}{\M_n}-\frac{\pp_n}{2\M_n^2}
\\
&\le&\frac{1}{\M_n}-\frac{1+\beta_0/\M_n}{2\M_n\www_n}
\le\frac{1}{\M_n}-\frac{1}{2\M_n\www_n}
\\
&=&\frac{1}{\M_n}\left(1-\frac{1}{2(2(n-1)+n\beta_0)}\right)
\\
&=&\frac{1}{\M_n}\left(1-\frac{1/(4+2\beta_0)}{n-4/(4+2\beta_0))}\right)
\\
&=&\frac{1}{\M_n}\left(1-\frac{\alpha}{n-4\alpha}\right)
=\frac{1}{\M_n}\left(\frac{n-5\alpha}{n-4\alpha}\right)\ .
\end{eqnarray*}

Now it is clear that the following inequality is sufficient for $Y_n$ to be a supermartingale:
\[
\frac{C_{n+1}}{\M_n}\left(\frac{n-5\alpha}{n-4\alpha}\right) \le \frac{C_n}{\M_n}
\ .\]
To make this inequality true put, for example, $C_{n+1}=C_n(1+\frac{\alpha}{n-5\alpha})$

A simple computation shows that the sequence $C_n/n^{\alpha}$ has a positive and finite limit, therefore $C_n$ satisfies both conditions from the statement of the proposition. This completes the proof.

%
%
%
%

\end{proof}

\subsection{Finite number of hubs}
\label{subsect:finiteCandidates}
In this subsection we prove that the set of vertices that have been hubs on some step is finite with probability $1$.

Consider a set of events
$$
B_M=\{\forall n\ \M_n>M n^{1/(4+2\beta_0)}\}\ .
$$
for any real $M>0$.

Let $v_{l(n)}$ be some vertex which has the maximal degree before the $n$-th step (in general, there can be several such vertices). Consider the following event
$$H_n=\{\textrm{the vertex }v_{n+1}\textrm{ has the same degree as }v_{l(n)}\textrm{ on some future step}\}.$$
We recall that the joint behaviour of vertices $v_{l(n)}$ and $v_{n+1}$ is described by the random walk from Section \ref{sect:2dim}, starting from the point $(\M_n,1)$. Using Corollary \ref{qAbound}, we get that for any $M>0$ and for sufficiently large $n$ the following is true:

	\[
 		\P(H_n \cap B_M) \le \max_{A\ge M n^{1/(4+2\beta_0)}} \frac{P(A)}{2^{A}}
 		\le \frac{P_1(M n^{1/(4+2\beta_0)})}{2^{M n^{1/(4+2\beta_0)}}}\ .		
 	\]
 where $P,\ P_1$ are some polynomials. The expressions on the right hand side form a convergent series, therefore, using Borel--Cantelli lemma one can show that the event $H_n \cap B_M$ occurs for only finitely many indices $n$ with probability $1$. Moreover, because of \eqref{fastM}, we see that $P(B_M)\to 1$ as $M \to 0$. Therefore, the event $H_n$ also occurs for only finitely many indices $n$ with probability $1$.

 Hence only finitely many vertices have been hubs.

 \begin{remark} \label{rem:nomax}
 In the linear and basic models the proof can be simplified using any fixed vertex for comparison instead of the leader $l(n)$, because in these models even the degree of any fixed vertex grows fast enough, i.e. like some power of $n$.
 \end{remark}

\subsection{Finite number of leader changes between any two fixed vertices}
\label{subsect:finitePair}
It remains to prove the following result:

\begin{thm}
  For any two vertices the set of all steps on which their degrees coincide is finite.
\end{thm}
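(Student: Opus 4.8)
The plan is to reduce the statement to a property of the two‑dimensional walk $R_k$ of Section \ref{sect:2dim} and then to show that this walk meets the diagonal only finitely often. Fix two vertices $v_i,v_j$. Sampling the process only at those steps on which the new edge is attached to one of them, the pair $(\deg v_i,\deg v_j)$ performs exactly the walk $R_k$: conditionally on the edge hitting $\{v_i,v_j\}$, it goes to $v_i$ with probability $\w(\deg v_i)/(\w(\deg v_i)+\w(\deg v_j))$. Their degrees coincide at a (real) step precisely when $R_k$ sits on the diagonal $\{(m,m)\}$. If only finitely many edges are ever attached to $\{v_i,v_j\}$, both degrees freeze; they then coincide infinitely often only if they freeze at a common value, which forces both degrees to stay bounded. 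This case is irrelevant to the persistent hub, since by (\ref{fastM}) the maximal degree eventually exceeds any common frozen value, so such a pair is never tied for the maximum after finitely many steps. Hence it suffices to treat the case where $R_k$ makes infinitely many moves and to prove that $R_k$ hits the diagonal only finitely often almost surely.

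Writing $D_k:=A_k-B_k$, the walk $R_k$ meets the diagonal exactly when the nearest‑neighbour walk $D_k$ on $\Z$ visits $0$, so I must show $D_k\neq 0$ for all large $k$. I would first establish this for the linear comparison model $\ww(n)=n+\beta_0$. There Proposition \ref{betadistr} (and the underlying Pólya urn) gives that $A_k/(A_k+B_k)$ converges to a random variable with a continuous (Beta) distribution; being continuous, its value differs from $\tfrac12$ almost surely, so $A_k/(A_k+B_k)$ is eventually bounded away from $\tfrac12$ and $|D_k|=|2A_k-(A_k+B_k)|\to\infty$. In particular the linear walk visits the diagonal only finitely often.

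To transfer this to the convex model I would compare the two walks. By the convexity argument behind Lemma \ref{convex:compare}, whenever $A>B$ with $A>A_0$ the convex walk pushes the leading coordinate harder than the linear one: replacing $\w(B)$ by the larger value $\ww_A(B)$ (convexity, interpolating $\w$ at $1$ and $A$) only decreases the fraction, giving $\frac{\w(A)}{\w(A)+\w(B)}\ge \frac{A+\beta(A)}{A+B+2\beta(A)}$, and then (\ref{convex:monotone}) with $\beta(A)<\beta_0$ and $A-B>0$ upgrades this to $\frac{\w(A)}{\w(A)+\w(B)}\ge \frac{A+\beta_0}{A+B+2\beta_0}$, the probability that the linear walk increases its own leader. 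Since both walks increase $A+B$ by one per step, I would build a monotone coupling keeping $|D_k^{\mathrm{conv}}|\ge |D_k^{\mathrm{lin}}|$; as $|D_k^{\mathrm{lin}}|\to\infty$ this yields $|D_k^{\mathrm{conv}}|\to\infty$, so the convex walk too meets the diagonal finitely often. Alternatively, once the convex lead has grown large one may finish by a straightforward generalization of Corollary \ref{qAbound}, bounding the probability of a return to the diagonal from a point of lead $d$ by a quantity summably small in $d$.

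The main obstacle is the coupling: the transition probabilities depend on the whole position $(A,B)$, not merely on $D$, and the comparison inequality holds only once both coordinates exceed $A_0$, so near the diagonal and for small degrees it may fail. I would handle this by starting the coupling only after the (almost surely finite) last time either coordinate is at most $A_0$, which does not affect a ``finitely often'' conclusion; the remaining care is to verify that the lead‑dominating coupling can be realized step by step, using that the linear leader‑increment probability $\tfrac12+\tfrac{D}{2(A+B+2\beta_0)}$ is monotone in $D$ for fixed $A+B$.
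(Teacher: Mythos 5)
Your proposal follows essentially the same route as the paper: pass to the embedded two--dimensional walk, compare with the linear model $\ww(n)=n+\beta_0$ via the convexity inequality of Lemma \ref{convex:compare}, use the P\'olya-urn/Beta limit (Proposition \ref{betadistr}) to show the linear difference leaves zero after finitely many steps, and transfer this to the convex walk by a coupling under which the convex absolute difference dominates the linear one. Your preliminary remark about both degrees freezing at a common value is a fair observation about the literal reading of the statement that the paper silently resolves by working on the embedded time scale.

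Two details in your sketch need repair, both in the coupling, which is the only nontrivial part and which you largely defer. (i) The comparison inequality requires only $\max(A,B)>A_0$, not both coordinates $>A_0$, and your proposed fix --- starting the coupling after ``the last time either coordinate is at most $A_0$'' --- is not an almost surely finite time in the convex model: one coordinate of the embedded walk can stay bounded forever (this is exactly the phenomenon the paper illustrates with $\w(n)=2^{2^n}$). What actually saves you is that $A+B$ increases by one per embedded step, so $\max(A,B)>A_0$ holds after at most $2A_0$ steps. (ii) The step-by-step domination $|D^{\mathrm{conv}}_k|\ge|D^{\mathrm{lin}}_k|$ has one genuinely delicate case that your monotonicity remark does not cover: when $|D^{\mathrm{lin}}_k|=0$ the linear difference jumps to $1$ deterministically, while the convex difference would drop to $0$ if it currently equalled $1$, breaking the domination. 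The paper rules this out by noting that the two differences always have the same parity (both change parity at every step and start equal), so $|D^{\mathrm{lin}}_k|=0$ forces $|D^{\mathrm{conv}}_k|$ to be even, hence either $0$ or $\ge 2$, and in both cases the next value is $\ge 1$. With these two repairs your argument coincides with the paper's.
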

\begin{proof}
  Consider any two vertices and the corresponding two-dimensional random walk. Suppose the random walk starts from the point $(A_k,B_k)$, which means that the degrees of these two vertices were at first equal to $A_k$ and $B_k$ respectively. Without loss of generality we may assume that $A_k+B_k > A_0$. Consider the linear two-dimensional comparison random walk (according to Lemma \ref{convex:compare}) starting from the same point, but with the other weight function $\ww(A)=A+\beta_0$.

  First we introduce some notation. By $\Delta_n:=|A_n-B_n|$ denote the difference between $A_n$ and $B_n$, and by $\dd_n:=|\aa_n-\bb_n|$ denote the corresponding difference in the linear comparison model. 

\begin{prop}
There exists a coupling between $\Delta_n$ and $\dd_n$, such that $\Delta_n$ stochastically dominates $\dd_n$ for any $n\ge k$.

\end{prop}

\begin{proof}
  We construct both functions $\Delta_n$ and $\dd_n$ on the same probability space preserving every independency relation each of them must satisfy.

Using induction on $n$, we now show that for every $n$, $\Delta_n \ge \dd_n$ with probability $1$. For $n=k$ it is true.

Now consider a set $L \subset \Omega$ of positive measure $p$ such that the functions $\Delta_n$ and $\dd_n$ are constants on $L$, and, by induction, $\Delta_n \ge \dd_n$ on $L$.

By $q$ denote the probability that $\Delta_n$ increases by $1$ (therefore, it decreases by $1$ with probability $1-q$), and by $\q$ denote the probability that $\dd_n$ increases by $1$. Let $\dd_n$ be positive on $L$. Then, by Lemma \ref{convex:compare}, $q>\q$. Let $L'$ be a subset of $L$ on which $\Delta_{n+1}=\Delta_n+1$, and $\L'$ be a subset of $L$ on which $\dd_{n+1}=\dd_n+1$. Clearly, the probability of the set $L'$ is greater than the probability of the set $\L'$, therefore we can choose them in such a way that $\L' \subset L'$. So on $L$ the induction inequality $\Delta_{n+1} \ge \dd_{n+1}$ now holds.

The only remaining set is the set where $\dd_n=0$. On its subset where $\Delta_n\neq 1$ the required inequality $\Delta_{n+1} \ge \dd_{n+1}$ holds automatically, and now all we need is to note that $\Delta_n$ and $\dd_n$ are of the same parity (because their parities both change on every step), so the set where $\dd_n=0$ and $\Delta_n=1$ is empty. This concludes the construction of the functions $\Delta_n$ and $\dd_n$.

\end{proof}
Now we show that with probability $1$ the sequence $\dd_n$ is equal to zero only finitely many times. Then it is also true for $\Delta_n$, because $\Delta_n \ge \dd_n$.

It follows from Proposition \ref{betadistr} and from the absolute continuity of beta-distribution that the probability of every particular value equals to zero. Therefore with probability $1$ $A_n/(A_n+B_n)$ converges to some $y \neq \frac12$. Hence this fraction can be equal to $\frac12$ only finitely many times, and it means that $\dd_n$ equals to zero only finitely many times with probability $1$, q.e.d.

\end{proof}

Now the result of Theorem \ref{thm:main} for $m_0=1$ obviously follows from those of Subsections \ref{subsect:finiteCandidates} and \ref{subsect:finitePair}.

From Theorem \ref{thm:main} we can easily deduce an important known result about the behaviour of maximal degrees in the linear model from \cite{M2}:

\begin{cor}
  In the linear model the variable $\M_n$ satisfies the following:
\[
\M_n n^{-1/(2+\beta)} \to \mu
\ ,\]
where $\mu$ is an almost surely positive and finite random variable.
\end{cor}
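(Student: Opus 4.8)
The plan is to deduce the behaviour of $\M_n$ from the growth of the degree of a single fixed vertex, combining the persistent hub produced by Theorem~\ref{thm:main} with the finiteness-of-hubs result of Subsection~\ref{subsect:finiteCandidates}.

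\emph{Step 1 (a martingale for one vertex).} First I would fix a vertex $v_k$ and work directly in the linear model $\w(n)=n+\beta$. Here the total weight before the $n$-th step is deterministic, $w_n=2(n-1)+n\beta=(2+\beta)n-2$, so $p_n^k=(X_n^k+\beta)/w_n$ and
\[
\E\bigl(X_{n+1}^k+\beta\mid\FF_n\bigr)=(X_n^k+\beta)\,\frac{w_n+1}{w_n}.
\]
Defining $b_n$ by $b_k=1$ and $b_{n+1}=b_n\,(w_n+1)/w_n$, the sequence $Y_n:=(X_n^k+\beta)/b_n$ is a nonnegative martingale, hence by Doob's theorem it converges almost surely to a finite limit $\xi_k$. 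Since $\log(b_{n+1}/b_n)=\log(1+1/w_n)\sim 1/\bigl((2+\beta)n\bigr)$, writing $b_n$ as a ratio of Gamma functions and applying Stirling's formula (just as in the proof of Proposition~\ref{thm:qAbound}) gives $b_n\sim c_\beta\,n^{1/(2+\beta)}$ for a positive constant $c_\beta$. Thus the limit $\eta_k:=\lim_n X_n^k\,n^{-1/(2+\beta)}=c_\beta\,\xi_k$ exists almost surely and is finite, and this holds simultaneously for all $k$ outside a single null set.

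\emph{Step 2 (positivity of the limit).} The finiteness of $\eta_k$ is immediate, but the delicate point, and the main obstacle, is that $\eta_k>0$ almost surely. A conditional Borel--Cantelli argument applied to $p_n^k\ge(1+\beta)/w_n\sim 1/\bigl((2+\beta)n\bigr)$ shows $X_n^k\to\infty$, yet this only yields $\eta_k\ge 0$; note that the supermartingale $1/(X_n^k+\beta)$ tends to $0$ in either case and so cannot separate $\eta_k=0$ from $\eta_k>0$. To upgrade to strict positivity I would use the Pólya-urn structure of the linear model: as recorded in Proposition~\ref{betadistr}, the limiting fractions of the associated two-colour urns are beta-distributed and therefore lie in $(0,1)$ almost surely, and the same urn representation identifies the law of $\xi_k$ as absolutely continuous on $(0,\infty)$, whence $\eta_k>0$ with probability one. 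This is precisely the positivity established in \cite{M2}, which may alternatively be invoked directly.

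\emph{Step 3 (passing to the maximum).} By Theorem~\ref{thm:main} there is an almost surely finite random index $K$ and a random step $n_0$ with $\M_n=X_n^{K}$ for all $n\ge n_0$, so
\[
\M_n\,n^{-1/(2+\beta)}=X_n^{K}\,n^{-1/(2+\beta)}\longrightarrow \eta_{K}=:\mu
\]
almost surely, the limit being unaffected by the finitely many early terms. Finiteness of $\mu$ follows from $\P(\eta_K=\infty)\le\sum_k\P(K=k,\ \eta_k=\infty)=0$, using that the finiteness-of-hubs statement of Subsection~\ref{subsect:finiteCandidates} forces $K$ to range over a finite set of genuine indices. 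For positivity, since $v_K$ is eventually the hub we have $X_n^K\ge X_n^1$ for large $n$, hence $\eta_K\ge\eta_1$, and $\eta_1>0$ almost surely by Step~2, so $\mu\ge\eta_1>0$ almost surely. Therefore $\mu$ is almost surely positive and finite, as claimed.
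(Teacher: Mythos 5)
Your proposal is correct and follows essentially the same route as the paper: reduce $\M_n$ to the degree of a single fixed vertex via the persistent hub of Theorem~\ref{thm:main}, then invoke the known single-vertex asymptotics $X_n^k\,n^{-1/(2+\beta)}\to\eta_k\in(0,\infty)$ in the linear model. You simply supply more detail than the paper does (the explicit martingale $(X_n^k+\beta)/b_n$ for finiteness, the urn/\cite{M2} citation for positivity, and the clean observation $\eta_K\ge\eta_1>0$), whereas the paper outsources the whole single-vertex statement to the literature.
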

\begin{proof}
  We know that $\M_n$ behaves like the degree of some fixed vertex. Moreover, it is known that in the linear model the degree of every vertex is asymptotically equivalent to $n^{-1/(2+\beta)}$ multiplied by some random constant.
\end{proof}

\subsection{Generalization to the case of an arbitrary $m_0$}
\label{subsect:m0}
The case of $m_0>1$ is often considered to be much more complicated than the case of $m_0=1$, because the graph is not a tree. But it turns out that all steps of the presented proof (pairwise vertex degree analysis, the sufficiently fast growth of the maximal degree, finite number of hubs and leader changes) remain literally the same for $m_0>1$, except for just one change: in the pairwise analysis part, the random walk related to the degrees of the vertices starts not from the point $(A,1)$, but from the point $(A,m_0)$. This obstacle can be easily avoided by introducing a new (convex and unbounded) weight  function $\w'(n):=\w(n+m_0-1)$. Then the random walk with the weight function $\w$ starting from the point $(A,m_0)$ is isomorphic to the random walk with the weight function $\w'$ starting from the point $(A-m_0+1,1)$, and for this case we have already provided all necessary bounds.

\section*{Acknowledgements}

I would like to thank my supervisor Professor M. A. Lifshits for his guidance, patience, time and valuable remarks. I am also grateful to Andrey Alpeev, who suggested to consider the convex model, and to Vladimir Zolotov for many fruitful conversations.
This research is supported by JSC "Gazprom Neft" and by Chebyshev Laboratory  (Department of Mathematics and Mechanics, St. Petersburg State University)  under RF Government grant 11.G34.31.0026.

\bibliographystyle{my-plain}
\bibliography{rndgrph}

Saint-Petersburg State University

email: {\tt  pgalashin@gmail.com}

 \end{document}